\documentclass[14pt]{extarticle} 
\usepackage[cp1251]{inputenc}
\usepackage[top=2.5cm,bottom=2.5cm,left=2.5cm,right=2.5cm]{geometry}
\usepackage{amsfonts,amsmath,amsthm,amscd,amssymb,latexsym}
\usepackage{mathtools}
\usepackage[english]{babel}
\usepackage{mathtools}
\usepackage {tikz}
\usepackage{subcaption}
\usetikzlibrary {positioning}


\DeclarePairedDelimiter\ceil{\lceil}{\rceil}
\DeclarePairedDelimiter\floor{\lfloor}{\rfloor}

\newcommand{\bs}{\backslash}
\DeclareMathOperator{\diam}{diam}
\DeclareMathOperator{\tr}{tr}

\pagestyle{myheadings} 

\thispagestyle{empty}

\setlength{\parindent}{0.3in}
\newtheorem{remark}{Remark}[section]

\newtheorem{theorem}[remark]{Theorem}
\newtheorem{proposition}[remark]{Proposition}
\newtheorem{corollary}[remark]{Corollary}

\begin{document}

\title{\vspace{-2cm} A note on the triameter of graphs}

\author{Artem Hak, Sergiy Kozerenko\footnote{Corresponding author: kozerenkosergiy\@@ukr.net.}\ \ and Bogdana Oliynyk\vspace{2ex} \\
{Department of Mathematics, Faculty of Informatics,}\\
{National University of Kyiv-Mohyla Academy,}\\
{Skovorody str. 2, 04070 Kyiv, Ukraine.}\\
}

\date{\vspace{-5ex}}

\maketitle

\begin{abstract}
In this note, we give  answers to three questions from the paper [A.\ Das, Triameter of graphs, {\it Discuss. Math. Graph Theory} {\bf 41} (2021), 601--616.].
Namely, we obtain a tight lower bound for the triameter of trees in terms of order and number of leaves. We show that in a connected block graph any triametral triple of vertices contains a diametral pair and that any diametral pair of vertices can be extended to a triametral triple.  We also present several open problems concerning the interplay between triametral triples, diametral pairs and peripheral vertices in median and distance-hereditary graphs.
\end{abstract}

{\bf Keywords:} triameter, diameter, trees, block graphs.

{\bf MSC 2020:} Primary: 05C12, Secondary: 05C05.

\section{Introduction}

In~\cite{Das:21} A.~Das initiated the study of a new graph parameter 
\[\tr(G)=\max\{d_{G}(a,b)+d_{G}(a,c)+d_{G}(b,c):a,b,c\in V(G)\}\] 
named the \emph{triameter} of a connected (simple, finite) graph $G$. At first, a triameter was used as a parameter in~\cite{SahPan:12}, but explicitly named only in~\cite{KoPan:15}. The main motivation for studying $\tr(G)$ comes from its appearence in lower bounds on radio $k$-chromatic number of a graph~\cite{KoPan:15,SahPan:15} and total domination number of a connected graph~\cite{HenYeo:14}. 

Among other results, in~\cite{Das:21} it was showed that for any connected graph $G$ we have $\tr(G)\geq g(G)$, where $g(G)$ is the girth of $G$. Also, the following lower bound for the triameter of a tree $T$ with $n$ vertices and $l$ leaves was presented: 
\begin{align}\label{bound-0}
\tr(G)\geq\ceil*{\frac{4(n-1)}{l-1}}.
\end{align}
In the final section of~\cite{Das:21} A.~Das raised four open questions concerning the triameter (to which we will refer through this paper as to ``Questions 1-4''):
\begin{enumerate}
	\item The bound~(\ref{bound-0}) is not tight. What is the tight lower bound for $\tr(T)$ for any given pair $n,l$?
	\item Is there another lower bound for $\tr(G)$ for all connected graphs $G$ in terms of parameters different from girth (it is believed that $\delta(G)$, $\Delta(G)$ will do)?
	\item Is it true that any triametral triple of vertices in a tree contains a diametral pair?
	\item Is it true that any diametral pair of vertices in a tree can be extended to a triametral triple?
\end{enumerate}
In this paper, we completely answer Question 1 by presenting a tight lower bound for $\tr(T)$ in terms of $n,l$.  

\begin{theorem}\label{thm-tree}
Let $T$ be a tree with $n\geq 4$ vertices and $l\geq 3$ leaves. Then 
\[\tr(T)\geq 6\floor*{\frac{n-1}{l}}+2\min\{(n-1)\bmod{l},3\}.\]	
Moreover, this bound is tight for any given pair $n,l$.
\end{theorem}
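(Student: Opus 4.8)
The plan is to reduce the triameter to a statement about Steiner trees and then prove a matching lower bound by induction on the number of leaves, with the balanced spider providing tightness.

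First I would record the basic reduction. For any three vertices $a,b,c$ of a tree there is a unique median vertex $m$ through which the three pairwise geodesics pass, and writing $x=d_T(a,m)$, $y=d_T(b,m)$, $z=d_T(c,m)$ one gets $d_T(a,b)+d_T(a,c)+d_T(b,c)=2(x+y+z)$, i.e.\ twice the number of edges of the minimal subtree (Steiner tree) spanning $\{a,b,c\}$. Since sliding a terminal out to a leaf never shrinks this subtree, $\tr(T)=2\cdot\max|E(S)|$, the maximum being over Steiner trees $S$ of triples of leaves. Hence, writing $n-1=ql+\rho$ with $0\le\rho<l$ (so $q=\floor*{\frac{n-1}{l}}$ and $\rho=(n-1)\bmod l$), it suffices to exhibit three leaves whose Steiner tree has at least $3q+\min\{\rho,3\}$ edges.

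Next I would set up a \emph{leg decomposition}: call the maximal pendant path from each leaf to the nearest branch vertex (vertex of degree $\ge 3$) its leg, so that the legs are pairwise edge-disjoint and every remaining edge is internal. Taking the three leaves with the longest legs shows that $\max|E(S)|$ is at least the sum of the three longest leg-lengths, plus the edges of the core path joining their branch vertices. Two consequences drive the argument. (a) When $T$ has a single branch vertex it is a spider, all edges lie on legs, and $\max|E(S)|$ equals the sum of the three longest of $l$ positive integers with sum $n-1$; in particular the base case $l=3$ gives $\max|E(S)|=n-1=3q+\rho=3q+\min\{\rho,3\}$ (as $\rho<3$), so $\tr(T)=2(n-1)$ matches the claim exactly. (b) The estimate $|E(S)|\ge(\text{three longest legs})$ controls what is lost when a leg is deleted.

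For the inductive step ($l\ge 4$) I would delete the shortest leg, of length $p$, producing a tree $T'$ with $l-1$ leaves and $n-p$ vertices; since there are $l$ edge-disjoint legs of length $\ge 1$ with total length at most $n-1$, the shortest satisfies $p\le(n-1)/l$, hence $p\le q$. As $T'\subseteq T$ we have $\tr(T)\ge\tr(T')$, and the inductive hypothesis bounds $\tr(T')$ in terms of $(n-p,\,l-1)$; when that bound alone is insufficient I would instead invoke the three-longest-legs estimate (augmented, if needed, by the internal edges on the core path between the chosen branch vertices), every surviving leg having length $\ge p$. The main obstacle is purely arithmetic: one must check that for every residue $\rho$ and every admissible $p\le q$, the maximum of the ``deleted-tree'' value $6q'+2\min\{\rho',3\}$, where $n-p-1=q'(l-1)+\rho'$, and the ``retained-legs'' value reaches the target $6q+2\min\{\rho,3\}$. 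The floor/remainder bookkeeping as $l\mapsto l-1$ and $n\mapsto n-p$, together with the fact that deleting a degree-$3$ branch vertex may merge two legs of $T'$ into one, is where all the care lies. Finally, tightness is immediate: the balanced spider with $\rho$ legs of length $q+1$ and $l-\rho$ legs of length $q$ realizes $\tr(T)=6q+2\min\{\rho,3\}$ exactly for every admissible pair $(n,l)$.
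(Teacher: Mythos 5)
Your proposal is correct in outline but takes a genuinely different route from the paper. The paper inducts on $n$: it prunes \emph{all} leaves at once, setting $T'=T\setminus L(T)$, disposes of the degenerate cases $|L(T')|\in\{1,2\}$ (star and bistar), and uses the exact identity $\tr(T)=\tr(T')+6$ (a triametral triple consists of leaves, and pruning shifts each pairwise distance by $2$); the work is then in comparing $\lfloor\frac{n-l-1}{l'}\rfloor$ with $\lfloor\frac{n-1}{l}\rfloor$ for the unknown leaf count $l'\le l$ of $T'$. You instead induct on $l$ by deleting the single shortest leg and use only the inequality $\tr(T)\ge\tr(T')$, which trades the unpredictable jump $l\mapsto l'$ for the controlled one $l\mapsto l-1$, $n\mapsto n-p$ with $p\le\lfloor\frac{n-1}{l}\rfloor$. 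The ``purely arithmetic'' obstacle you flag in fact closes immediately, and your fallback to the three-longest-legs estimate is never needed: writing $n-1=ql+\rho$ and using $p\le q$, one has $n-p-1=q(l-1)+(q+\rho-p)$ with $q+\rho-p\ge\rho\ge 0$; if $q+\rho-p<l-1$ then $q'=q$ and $\rho'=q+\rho-p\ge\rho$, so $6q'+2\min\{\rho',3\}\ge 6q+2\min\{\rho,3\}$, while if $q+\rho-p\ge l-1$ then $q'\ge q+1$ and already $6q'\ge 6q+6\ge 6q+2\min\{\rho,3\}$. Your worry about legs merging in $T'$ is likewise moot, since the induction hypothesis consumes only the pair $(n-p,\,l-1)$. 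Both proofs exhibit the same extremal example, the balanced spider (the paper's $T_{n,l}$). What your route buys is a clean invariant (triameter equals twice the maximum Steiner-tree size over leaf triples) and no case split on $l'$; what the paper's route buys is the sharper structural fact $\tr(T)=\tr(T\setminus L(T))+6$, which is of independent interest. To be complete your write-up would still need the two-line arithmetic above spelled out and the observation that a tree with exactly three leaves is a spider for the base case.
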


We also give affirmative answers to Questions 3 and 4 not only for trees, but rather for all connected block graphs. 

\begin{theorem}\label{thm-block}
Let $G$ be a connected block graph, $d=d_{G}$ and $a,b,c,x,y\in V(G)$ such that $d(a,b,c)=\tr(G)$, $d(x,y)=\diam(G)$. Then 	
\begin{align*}
&\max\{d(a,b),d(a,c),d(b,c)\}=\diam(G)\ \text{and}\\
&\max\{d(a,x,y),d(b,x,y),d(c,x,y)\}=\tr(G).
\end{align*}
\end{theorem}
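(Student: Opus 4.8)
The plan is to reduce both claims to (edge-)weighted trees, where medians, geodesics and branch depths are transparent, and then to compare perimeters. The starting observation is that the metric of a connected block graph is a tree metric: replacing each block $B$ with $|B|\ge3$ by a star, i.e. a fresh centre joined to every vertex of $B$ by an edge of weight $\tfrac12$ (keeping cut-edges as unit edges), yields a weighted tree $T$ whose path metric restricted to $V(G)$ equals $d_G$. Since this restriction is isometric and every diametral pair and triametral triple is attained at leaves of $T$, which are precisely the original vertices of $G$, it suffices to prove the two assertions for a weighted tree $T$, with the pair and the triple ranging over all of its vertices. In $T$, for any triple $u,v,w$ with median $m$ one has $d(u,v,w)=2(d(u,m)+d(v,m)+d(w,m))$; writing $\delta_1(p)\ge\delta_2(p)\ge\delta_3(p)$ for the three largest branch depths at a vertex $p$, this gives $\diam(T)=\max_p(\delta_1(p)+\delta_2(p))$ and $\tr(T)=2\max_p(\delta_1(p)+\delta_2(p)+\delta_3(p))$. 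A short exchange argument (pushing any leg endpoint deeper would raise the perimeter) shows that for a triametral triple $a,b,c$ with median $m$ and $\alpha=d(a,m)\ge\beta=d(b,m)\ge\gamma=d(c,m)$ one has $\{\alpha,\beta,\gamma\}=\{\delta_1(m),\delta_2(m),\delta_3(m)\}$, the largest pairwise distance is $d(a,b)=\alpha+\beta$, and $\tr(T)=2(\alpha+\beta+\gamma)$.

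For the first assertion I would prove $\alpha+\beta=\diam(T)$. As $\alpha+\beta\le\diam(T)$ always, assume $\alpha+\beta<\diam(T)$; then $m$ lies on no diametral path. Fix a diametral path with endpoints $P,P'$ and let $v^{*}$ be its vertex nearest to $m$, so $L:=d(m,v^{*})>0$ and the direction from $v^{*}$ towards $m$ is a branch of $v^{*}$ different from the two along the diameter. Choosing $R$ to be a deepest vertex reachable from $m$ while avoiding the branch towards $v^{*}$ (hence $d(m,R)=\beta'\ge\beta$), the triple $\{P,P',R\}$ has median $v^{*}$ and perimeter $2(\diam(T)+L+\beta')$. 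Triametrality forces $\diam(T)+L+\beta'\le\alpha+\beta+\gamma$, so $\diam(T)+L\le\alpha+\gamma\le\alpha+\beta<\diam(T)$, i.e. $L<0$, a contradiction. Hence $d(a,b)=\alpha+\beta=\diam(T)$.

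For the second assertion, fix a diametral pair $x,y$. For every vertex $z$ one has $d(x,z)+d(y,z)=d(x,y)+2\,d(z,[x,y])$, whence $d(z,x,y)=2\diam(T)+2\,d(z,[x,y])$; together with $\tr(T)=2\diam(T)+2\gamma$ the inequality $d(z,x,y)\le\tr(T)$ yields $d(z,[x,y])\le\gamma$ for every $z$, in particular for $a,b,c$. The crux is to show that $m$ lies on the geodesic $[x,y]$: otherwise $[x,y]$ is contained in a single branch of $m$, so one of the three distinct branches carrying $a,b,c$ avoids it, and that vertex $z$ satisfies $d(z,[x,y])=d(z,m)+d(m,[x,y])>\gamma$, contradicting the bound above. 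With $m\in[x,y]$ the geodesic uses only two branches of $m$, while $a,b,c$ occupy three; thus some $z\in\{a,b,c\}$ sits in an unused branch, so $d(z,[x,y])=d(z,m)$, and this number is both $\le\gamma$ and $\ge\gamma$, hence equals $\gamma$. Therefore $d(z,x,y)=2\diam(T)+2\gamma=\tr(T)$, so this diametral pair extends to a triametral triple.

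The step I expect to be most delicate is the reduction itself — certifying that the star expansion is isometric on $V(G)$ and that all extrema remain at original vertices (the geodesics inside a block are not gated in $G$, so such identities must be verified in $T$, not in $G$) — together with the lemma underpinning the second part, that the median of any triametral triple meets every diametral geodesic; granting these, the perimeter comparisons are routine.
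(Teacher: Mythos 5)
Your proof is correct in its essentials, but it follows a genuinely different route from the paper. The paper never leaves the graph $G$: it invokes Howorka's four-point condition for block graphs (Theorem~\ref{thm-four}) and runs a case analysis on which of the two competing sums realizes the maximum for each of the pairs $\{a,b\}$, $\{a,c\}$, $\{b,c\}$ against $\{x,y\}$, in each case telescoping the resulting equalities into a chain of the form $0\geq d(\cdot,x,y)-d(a,b,c)\geq k\diam(G)-(\text{sum of }k\text{ distances})\geq 0$, which forces all inequalities to be equalities and delivers both conclusions at once. You instead realize $d_G$ as a weighted tree metric via star expansion of the blocks and argue with medians and branch depths; this buys more geometric information (the diametral pair inside the triple is the one formed by the two deepest legs, and $\tr(G)=2\diam(G)+2\gamma$ where $\gamma$ is the shortest leg), at the cost of having to certify the reduction. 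Your certification has two small inaccuracies worth repairing: the leaves of $T$ are \emph{among} the original vertices (no new star centre is a leaf, since each has degree at least $3$), not ``precisely'' the original vertices, and that containment is what gives $\diam(T)=\diam(G)$ and $\tr(T)=\tr(G)$; and in the second assertion the phrase ``the three distinct branches carrying $a,b,c$'' silently assumes $\gamma>0$. When $\gamma=0$ you have $\tr(T)=2\diam(T)$ and the bound $d(z,[x,y])\leq\gamma=0$ makes $d(z,x,y)=\tr(T)$ for \emph{every} $z$, so that degenerate case is trivial, but it should be stated. With those two lines added, the argument is complete.
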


To our knowledge, Question 2 still remains open. In the last section of the paper, we formulate several similar questions concerning the interplay between the triametral triples, diametral pairs and peripheral vertices in median and distance-hereditary graphs.

\section{Preliminaries}

All graphs under consideration are simple and finite. A graph is \emph{connected} if any pair of its vertices can be joined by a path. The vertex set of a connected graph $G$ is equipped with the ``shortest-path'' metric $d_{G}$, where $d_{G}(u,v)$ equals the length of a shortest $u-v$ path in $G$. The \emph{diameter} of a connected graph $G$ is the value $\diam(G)=\max\{d_{G}(u,v):u,v\in V(G)\}$. A pair of vertices $u,v\in V(G)$ in a connected graph $G$ is called \emph{diametral} if $d_{G}(u,v)=\diam(G)$. A vertex $u\in V(G)$ is called \emph{peripheral} if it belongs to some diametral pair. 

For a triple of vertices $u,v,w\in V(G)$ in a connected graph $G$ put 
\[d_{G}(u,v,w)=d_{G}(u,v)+d_{G}(u,w)+d_{G}(v,w).\] 
The \emph{triameter} of a connected graph $G$ is defined as the value 
\[\tr(G)=\max\{d_{G}(u,v,w):u,v,w,\in V(G)\}.\] 
A triple of vertices $u,v,w\in V(G)$ is \emph{triametral} if $d_{G}(u,v,w)=\tr(G)$. 

Let $u,v\in V(G)$ be a pair of vertices  in a connected graph $G$. Put 
\[[u,v]_{G}=\{x\in V(G):d_{G}(u,x)+d_{G}(x,v)=d_{G}(u,v)\}\]
for the \emph{metric interval} between $u$ and $v$. A connected graph is called \emph{median}~\cite{Mul:78} if $|[u,v]_{G}\cap[u,w]_{G}\cap[v,w]_{G}|=1$ for any triple of vertices $u,v,w\in V(G)$.

A \emph{tree} is a connected graph without cycles. Note that any tree is a median graph. A vertex of degree one in a graph is called its \emph{leaf}. By $L(T)$ we denote the set of all leaves in a tree $T$. If $u$ is a leaf in a tree, then the unique vertex $v$ adjacent to $u$ is called its \emph{support vertex}.

A \emph{connected component} of a graph is its maximal connected subgraph. A vertex whose deletion increases the number of connected components in a graph is called its \emph{cut-vertex}. A graph is \emph{biconnected} if it has no cut-vertices. A \emph{block} of a graph is its maximal biconnected subgraph.  A \emph{block graph $B(G)$} of a given graph $G$ is the intersection graph of the collection of all blocks in $G$. A graph $H$ is called a \emph{block graph} if it is isomorphic to $B(G)$ for some $G$. It is well-known that a graph is a block graph if and only if each its block is complete~\cite{Har:63}. As a corollary, we obtain that every tree is a block graph.

The following bounds for the triameter of a graph can be easily derived from the definition.

\begin{proposition}{\rm\label{KoPan:15}}
For any connected graph $G$ we have 
\[2\diam(G)\leq\tr(G)\leq 3\diam(G).\]
\end{proposition}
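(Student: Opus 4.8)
The plan is to establish the two inequalities separately, each directly from the definitions together with the elementary metric properties of $d_{G}$. Throughout, I would write $d=\diam(G)$ and fix a diametral pair $x,y\in V(G)$, so that $d_{G}(x,y)=d$.

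For the upper bound $\tr(G)\leq 3d$, I would observe that for any triple $u,v,w\in V(G)$ each of the three summands satisfies $d_{G}(u,v)\leq d$, $d_{G}(u,w)\leq d$, and $d_{G}(v,w)\leq d$, simply by the definition of the diameter. Adding these three inequalities gives $d_{G}(u,v,w)\leq 3d$, and since this holds for every triple, the maximum defining $\tr(G)$ is bounded by $3d$ as well.

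For the lower bound $\tr(G)\geq 2d$, the key tool is the triangle inequality for the metric $d_{G}$. Taking the diametral pair $x,y$ together with any third vertex $z\in V(G)$, I would group the terms as $d_{G}(x,y,z)=d_{G}(x,y)+\bigl(d_{G}(x,z)+d_{G}(z,y)\bigr)$ and apply $d_{G}(x,z)+d_{G}(z,y)\geq d_{G}(x,y)=d$ to the parenthesised part. This yields $d_{G}(x,y,z)\geq d+d=2d$, and since $\tr(G)$ is a maximum over all triples it is at least as large as this particular value, so $\tr(G)\geq 2d$.

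Neither direction presents a genuine obstacle, which is consistent with the remark that the bounds follow easily from the definition. The only point meriting a moment's care is that the lower bound does not require the third vertex to be distinct from $x$ and $y$: the triangle inequality applies verbatim, and indeed choosing $z=x$ already forces $d_{G}(x,y,z)=2d$. This is precisely why the estimate holds for every connected graph irrespective of its order.
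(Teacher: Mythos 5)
Your proof is correct and follows exactly the elementary route the paper intends (the paper states the bounds without proof, noting they are easily derived from the definition): the upper bound by bounding each summand by $\diam(G)$, and the lower bound by applying the triangle inequality to a diametral pair together with an arbitrary third vertex.
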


A connected graph $G$ is called \emph{antipodal} if for any its vertex $u\in V(G)$ there exists a vertex $u'\in V(G)$ with $[u,u']_{G}=V(G)$. Note that for any $u$ such a vertex $u'$ is always unique. The vertex $u'$ is called \emph{antipodal vertex} for $u$. It is clear that for an antipodal graph $G$, $d_{G}(u,u')=\diam(G)$ for any vertex $u\in V(G)$. We have the following result for the triameter of antipodal graphs.

\begin{proposition}\label{antipod}
For any antipodal graph $G$ it holds $\tr(G)=2\diam(G)$.	
\end{proposition}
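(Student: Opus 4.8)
The plan is to use Proposition~\ref{KoPan:15} for one inequality and the defining property of antipodal graphs for the other. Since Proposition~\ref{KoPan:15} already gives $\tr(G)\geq 2\diam(G)$ for every connected graph, it suffices to establish the reverse inequality $\tr(G)\leq 2\diam(G)$, i.e.\ to show that $d_{G}(a,b,c)\leq 2\diam(G)$ for an arbitrary triple of vertices $a,b,c\in V(G)$. Fixing such a triple, I would single out the antipodal vertex $a'$ of $a$, which exists and is unique by the definition of an antipodal graph, and recall that $[a,a']_{G}=V(G)$ and $d_{G}(a,a')=\diam(G)$.

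The key step is to exploit that every vertex lies on a shortest $a$--$a'$ path. In particular, both $b$ and $c$ belong to $[a,a']_{G}$, so by the definition of the metric interval
\[
d_{G}(a,b)=\diam(G)-d_{G}(b,a'),\qquad d_{G}(a,c)=\diam(G)-d_{G}(c,a').
\]
Substituting these two identities into $d_{G}(a,b,c)=d_{G}(a,b)+d_{G}(a,c)+d_{G}(b,c)$ yields
\[
d_{G}(a,b,c)=2\diam(G)-d_{G}(b,a')-d_{G}(c,a')+d_{G}(b,c).
\]
Now I would apply the triangle inequality in the form $d_{G}(b,c)\leq d_{G}(b,a')+d_{G}(a',c)$, which makes the last three terms non-positive and gives $d_{G}(a,b,c)\leq 2\diam(G)$ at once.

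Since $a,b,c$ were arbitrary, this bounds the maximum and shows $\tr(G)\leq 2\diam(G)$; combined with the lower bound from Proposition~\ref{KoPan:15} it forces equality. There is no real obstacle here: the only idea that needs to be spotted is to take the antipode of one chosen vertex (rather than treating all three symmetrically) so that the interval identity $[a,a']_{G}=V(G)$ converts two of the three pairwise distances into complements of distances to $a'$, after which the triangle inequality closes the gap. One should just be mildly careful to note that $a'$ is well defined and that the interval condition applies to \emph{every} vertex, including $b$ and $c$, which is exactly what antipodality guarantees.
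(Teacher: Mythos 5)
Your proposal is correct and follows essentially the same argument as the paper: both exploit $[a,a']_{G}=V(G)$ to rewrite $d_{G}(a,b)$ and $d_{G}(a,c)$ as $d_{G}(a,a')$ minus distances to the antipode, then apply the triangle inequality $d_{G}(b,c)\leq d_{G}(b,a')+d_{G}(a',c)$. The only cosmetic difference is that you bound an arbitrary triple while the paper applies the identical computation directly to a triametral triple.
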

\begin{proof}
Let $x,y,z\in V(G)$ be a triametral triple of vertices in $G$. Then $2\diam(G)\leq\tr(G)=d_{G}(x,y,z)=d_{G}(x,y)+d_{G}(x,z)+d_{G}(y,z)=d_{G}(x,x')-d_{G}(x',y)+d_{G}(x,x')-d_{G}(x',z)+d_{G}(y,z)=2d_{G}(x,x')-d_{G}(x',y)-d_{G}(x',z)+d_{G}(y,z)\leq 2d_{G}(x,x')=2\diam(G)$, where $x'$ is the antipodal vertex for $x$.
\end{proof}

Let $n\in\mathbb{N}$. The \emph{$n$-cube} is a graph $Q_{n}$ with $V(Q_{n})=\{0,1\}^n$ and $E(Q_{n})=\{xy:\ \text{there is a unique}\ i,\ 1\leq i\leq n\ \text{with}\ x_{i}\neq y_{i}\}$. Note that every $n$-cube is a median as well as an antipodal graph. 

\begin{corollary}\label{cube}
For any $n\in\mathbb{N}$ we have $\tr(Q_{n})=2n$.	
\end{corollary}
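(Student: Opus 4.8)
The plan is to invoke Proposition~\ref{antipod} directly, since the paragraph immediately preceding the corollary already records that every $Q_n$ is an antipodal graph. Once that is granted, the only remaining task is to compute $\diam(Q_n)$, after which the result drops out automatically.

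First I would recall that the shortest-path metric on $Q_n$ is precisely the Hamming distance: for $x,y\in\{0,1\}^n$, the value $d_{Q_n}(x,y)$ equals the number of coordinates in which $x$ and $y$ differ. One direction is clear, since every edge of $Q_n$ flips exactly one coordinate, so any $x$--$y$ path must flip each differing coordinate at least once; the other direction follows by flipping the differing coordinates one at a time, which produces a path of exactly that length.

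Next I would determine the diameter from this description. The number of differing coordinates is at most $n$, with equality exactly when $y$ is the bitwise complement $\bar{x}$ of $x$. Hence $\diam(Q_n)=n$, and $\bar{x}$ is the antipodal vertex of $x$, consistent with $[x,\bar{x}]_{Q_n}=V(Q_n)$. Finally, applying Proposition~\ref{antipod} to the antipodal graph $Q_n$ yields $\tr(Q_n)=2\diam(Q_n)=2n$, as claimed.

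I do not anticipate a genuine obstacle here: all the structural work is already carried by Proposition~\ref{antipod}, and the single elementary computation $\diam(Q_n)=n$ reduces to the Hamming-distance characterization of $Q_n$. If I wanted to avoid even this computation, I could instead note that antipodality itself guarantees $d_{Q_n}(x,x')=\diam(Q_n)$ for every $x$, so it would be enough to exhibit one antipodal pair at distance $n$, namely $x$ and $\bar{x}$.
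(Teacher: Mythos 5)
Your proposal is correct and follows the paper's own proof exactly: the paper also just combines $\diam(Q_n)=n$ with Proposition~\ref{antipod} applied to the antipodal graph $Q_n$. The only difference is that you spell out the Hamming-distance computation of the diameter, which the paper takes for granted.
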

\begin{proof}
Since $\diam(Q_{n})=n$ and $Q_{n}$ is antipodal, the equality $\tr(Q_{n})=2n$ immediately follows from Proposition~\ref{antipod}.	
\end{proof}

Note that Corollary~\ref{cube} also can be deduced from the following observation about the triameter of Cartesian product of two connected graphs.

\begin{proposition}{\rm\cite{KoPan:15}}
For any two connected graphs $G$ and $H$, $\tr(G\square H)=\tr(G)+\tr(H)$.	
\end{proposition}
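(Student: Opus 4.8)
The plan is to reduce everything to the standard distance formula for Cartesian products, namely that $d_{G\square H}((g,h),(g',h'))=d_{G}(g,g')+d_{H}(h,h')$ for all $g,g'\in V(G)$ and $h,h'\in V(H)$. Once this additivity of distances over the two factors is in hand, the proposition follows by a direct computation paired with a matching construction, so I would either quote the formula as folklore or establish it first by observing that any geodesic in $G\square H$ projects onto geodesics in each factor, while conversely one can concatenate independent shortest paths in the two coordinates.

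For the upper bound I would take an arbitrary triametral triple $(g_{1},h_{1}),(g_{2},h_{2}),(g_{3},h_{3})$ realizing $\tr(G\square H)$. Expanding each of the three pairwise distances via the additivity formula produces six terms; regrouping them by factor, the total splits exactly as $d_{G}(g_{1},g_{2},g_{3})+d_{H}(h_{1},h_{2},h_{3})$. Bounding each summand above by the corresponding triameter then gives $\tr(G\square H)\leq\tr(G)+\tr(H)$.

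For the lower bound I would run the same computation in reverse. Fix a triametral triple $g_{1},g_{2},g_{3}$ in $G$ and a triametral triple $h_{1},h_{2},h_{3}$ in $H$, and test the single triple $(g_{1},h_{1}),(g_{2},h_{2}),(g_{3},h_{3})$ in the product. Additivity again converts its perimeter into $d_{G}(g_{1},g_{2},g_{3})+d_{H}(h_{1},h_{2},h_{3})=\tr(G)+\tr(H)$, so $\tr(G\square H)\geq\tr(G)+\tr(H)$. Combining the two inequalities yields the desired equality.

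The only genuine content is the distance formula itself; the rest is bookkeeping. Thus, if one does not take the formula for granted, the main obstacle is verifying that distances in $G\square H$ are precisely additive over the factors. Of the two directions, the delicate one is the inequality $d_{G\square H}\geq d_{G}+d_{H}$, proved by projecting a shortest product path onto each coordinate and noting that the projected walks cannot be shorter than the respective factor distances.
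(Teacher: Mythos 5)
Your argument is correct: the additivity of $d_{G\square H}$ over the two factors immediately gives both inequalities, since the perimeter of any triple in the product splits as the sum of the perimeters of its two projections, and conversely any pairing of a triametral triple of $G$ with one of $H$ achieves $\tr(G)+\tr(H)$. The paper itself states this proposition as a cited result from the literature and gives no proof, so there is nothing to compare against; your write-up is the standard argument and is complete once the product distance formula is established, which you correctly identify as the only real content.
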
	

Now, since $Q_{n}$ is a Cartesian product of $n$ copies of $K_{2}$, the desired equality $\tr(Q_{n})=2n$ easily follows.

\begin{corollary}
Every pair of vertices in an antipodal graph can be extended to a triametral triple.	
\end{corollary}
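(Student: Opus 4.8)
The plan is to exploit the antipodal structure directly: given a pair $u,v$, I will show that the antipode $u'$ of $u$ always serves as a valid third vertex. First I would recall from Proposition~\ref{antipod} that $\tr(G)=2\diam(G)$ for an antipodal graph $G$, so it suffices to produce, for each pair $u,v$, a vertex $w$ with $d_{G}(u,v,w)=2\diam(G)$.

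Next I would invoke the defining property of the antipode. Writing $D=\diam(G)$ and letting $u'$ be the antipodal vertex of $u$, we have $[u,u']_{G}=V(G)$, hence every vertex lies on a shortest $u$--$u'$ path. In particular $v\in[u,u']_{G}$, which gives $d_{G}(u,v)+d_{G}(v,u')=d_{G}(u,u')=D$, i.e.\ $d_{G}(v,u')=D-d_{G}(u,v)$.

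Then the computation is immediate:
\[
d_{G}(u,v,u')=d_{G}(u,v)+d_{G}(u,u')+d_{G}(v,u')=d_{G}(u,v)+D+\bigl(D-d_{G}(u,v)\bigr)=2D=\tr(G),
\]
so $\{u,v,u'\}$ is triametral and extends the pair $u,v$.

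The only thing to be careful about is that $u,v,u'$ should be three genuinely distinct vertices. Since $d_{G}(u,u')=D\geq 1$ we always have $u\neq u'$, and $u\neq v$ because $u,v$ form a pair; the one degenerate situation is $v=u'$, i.e.\ when $u,v$ are already antipodal. I would handle this case separately: if $v=u'$, then $d_{G}(u,v)=D$ and, by the same interval identity, every vertex $w$ satisfies $d_{G}(u,w)+d_{G}(w,v)=D$, so $d_{G}(u,v,w)=2D=\tr(G)$ for an arbitrary $w$, and choosing any $w\notin\{u,v\}$ completes the extension. I do not expect a serious obstacle here: the whole argument rests on the single observation that the antipode of $u$ absorbs the distance $d_{G}(u,v)$ exactly, forcing the triple sum to equal $2D$ regardless of where $v$ sits.
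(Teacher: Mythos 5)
Your argument is correct and is essentially the paper's own proof: the paper also adjoins the antipode of one of the two given vertices (it uses $v'$ rather than $u'$) and concludes $d_{G}(u,v,v')\geq 2d_{G}(v,v')=2\diam(G)=\tr(G)$ via Proposition~\ref{antipod}. Your extra care about the degenerate case $v=u'$ and your use of the interval identity to get exact equality (rather than the inequality $d_{G}(u,v)+d_{G}(u,v')\geq d_{G}(v,v')$ followed by maximality) are only cosmetic differences.
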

\begin{proof}
If $u,v\in V(G)$ is a pair of vertices in an antipodal graph $G$, then Proposition~\ref{antipod} asserts $d_{G}(u,v,v')=d_{G}(u,v)+d_{G}(u,v')+d_{G}(v,v')\geq 2d_{G}(v,v')=2\diam(G)=\tr(G)$, where $v'$ is the antipodal vertex for $v$.	
\end{proof}

In particular, Question 4 holds for antipodal graphs. However, Question 3 does not hold for antipodal graphs as the $3$-cube $Q_{3}$ has a triametral triple of vertices $x,y,z\in V(Q_{3})$ with $d_{Q_{3}}(x,y)=d_{Q_{3}}(x,z)=d_{Q_{3}}(y,z)=2<3=\diam(Q_{3})$.

\section{Main results}

\subsection{An optimal lower bound for the triameter of trees}

In this subsection we prove Theorem~\ref{thm-tree}.

\begin{proof}[Proof of Theorem~\ref{thm-tree}]
We use induction on $n\geq 4$. If $n=4$, then $T=K_{1,3}$, $l=3$ and $\tr(T)=6=6\floor*{\frac{n-1}{l}}+2\min\{(n-1)\bmod{l},3\}$. Now suppose that $n\geq 5$. Consider the tree $T'=T\bs L(T)$ and put $l'=|L(T')|$. Note that $l'\leq l$. If $l'=1$, then $T=K_{1,n-1}$, $l=n-1$ and $\tr(T)=6=6\lfloor\frac{n-1}{l}\rfloor+2\min\{(n-1)\bmod{l},3\}$. If $l'=2$, then $T$ is bistar and $l=n-2$. Since $n\geq 5$, $T\neq P_{4}$ and therefore $\tr(T)=8=6\floor*{\frac{n-1}{l}}+2\min\{(n-1)\bmod{l},3\}$. Thus, assume $l'\geq 3$. By induction assumption, $\tr(T')\geq 6\lfloor\frac{n-l-1}{l'}\rfloor+2\min\{(n-l-1)\bmod{l'},3\}$.

{\bf Claim:} $\tr(T)=\tr(T')+6$.

Let $d_{T'}(x,y,z)=\tr(T')$ for some $x,y,z\in V(T')$. Then $x,y,z\in L(T')$ and hence there is a triple of vertices $x',y',z'\in L(T)$ with $xx',yy',zz'\in E(T)$. We have $\tr(T)\geq d_{T}(x',y',z')=d_{T'}(x,y,z)+6=\tr(T')+6$. Similarly, if $d_{T}(a,b,c)=\tr(T)$, then $a,b,c\in L(T)$. Let $a',b',c'$ be the corresponding support vertices in $T$ for the leaves $a,b,c$, respectively. Then $\tr(T)=d_{T}(a,b,c)=d_{T'}(a',b',c')+6\leq\tr(T')+6$. This proves the claim.

Put $m=(n-1)\bmod{l}$ and $m'=(n-l-1)\bmod{l'}$ for the sake of simplicity. Note that $n-l-1\geq m$. By Claim,
\begin{align*}
&\tr(T)-(6\lfloor\frac{n-1}{l}\rfloor+2\min\{(n-1)\bmod{l},3\})\\
&=\tr(T')+6-6\lfloor\frac{n-1}{l}\rfloor-2\min\{m,3\}\\
&\geq 6\lfloor\frac{n-l-1}{l'}\rfloor+2\min\{(n-l-1)\bmod{l'},3\}+6-6\lfloor\frac{n-1}{l}\rfloor-2\min\{m,3\}\\
&=6\lfloor\frac{n-l-1}{l'}\rfloor+2\min\{m',3\}+6-6\lfloor\frac{n-1}{l}\rfloor-2\min\{m,3\}\\
&=6(\lfloor\frac{n-l-1}{l'}\rfloor-\lfloor\frac{n-1}{l}\rfloor+1)+2(\min\{m',3\}-\min\{m,3\})\\
&=6(\lfloor\frac{n-l-1}{l'}\rfloor-\lfloor\frac{n-l-1}{l}\rfloor)+2(\min\{m',3\}-\min\{m,3\}).
\end{align*}

Since $l'\leq l$, $\lfloor\frac{n-l-1}{l'}\rfloor\geq\lfloor\frac{n-l-1}{l}\rfloor$. If $\lfloor\frac{n-l-1}{l'}\rfloor=\lfloor\frac{n-l-1}{l}\rfloor$, then $\frac{n-l-1-m'}{l'}=\frac{n-l-1-m}{l}$. Therefore, $m'=\frac{(n-l-1)(l-l')+ml'}{l}\geq\frac{m(l-l')+ml'}{l}=m$. Hence, in case $\lfloor\frac{n-l-1}{l'}\rfloor=\lfloor\frac{n-l-1}{l}\rfloor$, we have $2(\min\{m',3\}-\min\{m,3\})\geq 0$ and thus $\tr(T)-(6\lfloor\frac{n-1}{l}\rfloor+2\min\{(n-1)\bmod{l},3\})\geq 0$. Finally, if $\lfloor\frac{n-l-1}{l'}\rfloor>\lfloor\frac{n-l-1}{l}\rfloor$, then 
\begin{align*}
&\tr(T)-(6\lfloor\frac{n-1}{l}\rfloor+2\min\{(n-1)\bmod{l},3\})\\
&\geq 6(\lfloor\frac{n-l-1}{l'}\rfloor-\lfloor\frac{n-l-1}{l}\rfloor)+2(\min\{m',3\}-\min\{m,3\})\\
&>6+2(\min\{m',3\}-\min\{m,3\})\geq 6-2\cdot 3=0
\end{align*}
as well. In all cases, $\tr(T)\geq 6\lfloor\frac{n-1}{l}\rfloor+2\min\{(n-1)\bmod{l},3\}$ which proves the induction step.

Now we prove that the obtained bound is tight. To do this, fix $n\geq 4$ and $3\leq l\leq n-1$. Construct a tree $T_{n,l}$ as follows: start with a star $K_{1,l}$, then fix a set of edges $E'\subset E(K_{1,l})$ with $|E'|=m=(n-1)\bmod{l}$ and subdivide each of them by $\lfloor\frac{n-l-1}{l}\rfloor+1$ new vertices; each other edge in $K_{1,l}$ subdivide by $\lfloor\frac{n-l-1}{l}\rfloor$ new vertices to obtain $T_{n,l}$. By construction, $T_{n,l}$ has 
\begin{align*}
&1+l+\lfloor\frac{n-l-1}{l}\rfloor\cdot l+(n-1)\bmod{l}\\
&=1+l+\lfloor\frac{n-l-1}{l}\rfloor\cdot l+(n-l-1)\bmod{l}\\
&=1+l+n-l-1=n
\end{align*}
vertices and $l$ leaves. 

We must consider four cases depending on the remainder $m$. If $m\geq 3$, then fix a triple of vertices $a,b,c$ each incident to some edge from $E'$. It holds $\tr(T_{n,l})=d_{T_{n,l}}(a,b,c)=3\cdot 2\cdot(\lfloor\frac{n-l-1}{l}\rfloor+2)=6\lfloor\frac{n-1}{l}\rfloor+6=6\lfloor\frac{n-1}{l}\rfloor+2\min\{m,3\}$. If $m=2$, then fix two leaf vertices $a,b$ each incident to some edge from $E'$ and another leaf vertex $c$ from $T_{n,l}$. In this case,
\begin{align*}
&\tr(T_{n,l})=d_{T_{n,l}}(a,b,c)=d_{T_{n,l}}(a,b)+d_{T_{n,l}}(a,c)+d_{T_{n,l}}(b,c)\\
&=2\cdot(\lfloor\frac{n-l-1}{l}\rfloor+2)+2\cdot(2\lfloor\frac{n-l-1}{l}\rfloor+3)\\
&=6\lfloor\frac{n-1}{l}\rfloor+4=6\lfloor\frac{n-1}{l}\rfloor+2\min\{m,3\}. 
\end{align*}

Further, if $m=1$, then fix a leaf vertex $a$ which is incident to the unique edge from $E'$ and two other leaf vertices $b,c$ from $T_{n,l}$. We have 
\begin{align*}
&\tr(T_{n,l})=d_{T_{n,l}}(a,b,c)=d_{T_{n,l}}(a,b)+d_{T_{n,l}}(a,c)+d_{T_{n,l}}(b,c)\\
&=2\cdot\lfloor\frac{n-l-1}{l}\rfloor+3+2\cdot2\cdot(\lfloor\frac{n-l-1}{l}\rfloor+1)\\
&=6\lfloor\frac{n-1}{l}\rfloor+2=6\lfloor\frac{n-1}{l}\rfloor+2\min\{m,3\}. 
\end{align*}

Finally, for $m=0$, then for any triple $a,b,c$ of leaf vertices from $T_{n,l}$ it holds $\tr(T_{n,l})=3\cdot 2\cdot(\lfloor\frac{n-l-1}{l}\rfloor+1)=6\lfloor\frac{n-1}{l}\rfloor=6\lfloor\frac{n-1}{l}\rfloor+2\min\{m,3\}$. In all four cases the desired equality holds.
\end{proof}

Figure~\ref{fig-tree} contains a tree $T_{n,l}$ from the proof of Theorem~\ref{thm-tree} for $n=10$, $l=4$.

\begin{figure}[htb]
	\centering
	\begin{tikzpicture}[auto ,node distance = 1 cm,on grid ,
	semithick , state/.style ={circle, top color =black , bottom color = black , 
		draw, minimum width=1mm}, inner sep=2.5pt]
	
	\node at (0.9,3) {\Large $T_{10,4}:$};
	
	\node[state] (ll) {};
	\node[state]  (l)  [right = 15mm of ll] {};
	\node[state]  (r)  [right = 15mm of  l] {};
	\node[state] (rr)  [right = 15mm of  r] {};
	
	\node[state] (bll) [below = 20mm of ll] {};
	\node[state] (bl)  [below = 20mm of  l] {};
	\node[state] (br)  [below = 20mm of  r] {};
	\node[state] (brr) [below = 20mm of rr] {};
	
	\node[state] (bbrr) [below = 20mm of brr] {};
	
	\node[state] (t) [above right  = 20mm and 7.5mm of l] {};
	
	\path (t) edge  node {} (ll);
	\path (t) edge  node {} (l);
	\path (t) edge  node {} (r);
	\path (t) edge  node {} (rr);
	\path (ll) edge  node {} (bll);
	\path (l) edge  node {} (bl);
	\path (r) edge  node {} (br);
	\path (rr) edge  node {} (brr);
	\path (bbrr) edge  node {} (brr);

	\end{tikzpicture}
\caption{}\label{fig-tree}
\end{figure}

\subsection{From triameter to diameter and vice versa in block graphs}

Consider the graph $G$ on Figure~\ref{fig-0}. It is easy to see that $\tr(G)=d_{G}(a,b,c)=12$ and $\diam(G)=d_{G}(x,y)=5$. Also, $d_{G}(a,b)=d_{G}(a,c)=d_{G}(b,c)=4$ and $d_{G}(x,y,z)=10$ for all $z\in V(G)$ (since  $[x,y]_{G}=V(G)$). In other words, the triametral triple $a,b,c$ does not contain a diametral pair in $G$ as well as diametral pair $x,y$ can not be extended to a triametral triple in $G$.

\begin{figure}[htb]
	\centering
	\begin{tikzpicture}[auto ,node distance = 1 cm,on grid ,
	semithick , state/.style ={circle, top color =black , bottom color = black , 
		draw, minimum width=1mm}, inner sep=2.5pt]
	
	\node at (0,2) {\Large $G:$};
	\node[state, label=left:$x$] (l) {};
	\node[state] (m1)                   [right = 15mm of l] {};
	\node[state, label=above:$b$] (m2)  [right = 15mm of m1] {};
	\node[state] (m3)                   [right = 15mm of m2] {};
	\node[state] (m4)                   [right = 15mm of m3] {};
	\node[state, label=right:$y$] (r)   [right = 15mm of m4] {};
	
	\node[state] (t1)                   [above = 15mm of m1] {};
	\node[state, label=above:$a$] (t2)  [above = 15mm of m2] {};
	\node[state] (t3)                   [above = 15mm of m3] {};
	
	\node[state] (b1)                   [below = 15mm of m1] {};
	\node[state, label=above:$c$] (b2)  [below = 15mm of m2] {};
	\node[state] (b3)                   [below = 15mm of m3] {};

	\path (l) edge  node {} (m1);
	\path (m1) edge  node {} (m2);
	\path (m2) edge  node {} (m3);
	\path (m3) edge  node {} (m4);
	\path (m4) edge  node {} (r);
	
	\path (l) edge  node {} (t1);
	\path (l) edge  node {} (b1);
	
	\path (b1) edge  node {} (b2);
	\path (b2) edge  node {} (b3);
	\path (t1) edge  node {} (t2);
	\path (t2) edge  node {} (t3);
	
	\path (t3) edge  node {} (m4);
	\path (b3) edge  node {} (m4);
	
	\end{tikzpicture}
\caption{}\label{fig-0}
\end{figure}

To show that any triametral triple of vertices in a block graph contains a diametral pair and that any diametral pair of vertices can be extended to a triametral triple, we will use the next metric characterization of block graphs.

\begin{theorem}{\rm\cite{How:79}}\label{thm-four}
A connected graph $G$ is a block graph if and only if its metric $d_{G}$ satisfies the ``$4$-point condition'': for any $x,y,z,t\in V(G)$ it holds
\[d_{G}(x,y)+d_{G}(z,t)\leq\max\{d_{G}(x,z)+d_{G}(y,t),d_{G}(x,t)+d_{G}(y,z)\}.\]
\end{theorem}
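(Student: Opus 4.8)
The statement is an equivalence, so the plan is to prove the two implications separately, using throughout the characterization quoted in the preliminaries that a graph is a block graph if and only if each of its blocks is complete. Thus I will prove: (i) if every block of $G$ is complete then $d_G$ satisfies the $4$-point condition; and (ii) if $d_G$ satisfies the $4$-point condition then every block of $G$ is complete.

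For direction (i) the idea is to exhibit an isometric embedding of $(V(G),d_G)$ into a weighted tree, after which the $4$-point condition is automatic, since every tree metric satisfies it (for any four points of a tree, among the three ``matching sums'' the two largest coincide, by inspecting the quartet topology). The tree I would build is a decorated block-cut tree $\mathcal T$: take one node $\beta_B$ for each block $B$ and one node $\widehat v$ for each vertex $v$, join $\widehat v$ to $\beta_B$ precisely when $v\in B$, and give every edge weight $\tfrac12$. Since the block-cut tree of a connected graph is a tree and the vertices $v$ that are not cut-vertices become pendant leaves, $\mathcal T$ is a tree. A vertex $v$ is sent to $\widehat v$, and one checks that the $\mathcal T$-distance between $\widehat u$ and $\widehat v$ accrues $\tfrac12$ for each node of the (unique) alternating sequence of vertex- and block-nodes on the path between them, so that each block traversed contributes exactly $1$; as a shortest $u$--$v$ path in a block graph crosses the blocks of this sequence one edge at a time, this equals $d_G(u,v)$. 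Hence $\phi(v)=\widehat v$ is an isometry and $d_G$ inherits the $4$-point condition.

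For direction (ii) I would argue by contraposition: assume some block $B$ of $G$ is not complete and produce a violating quadruple. Because $B$ is an induced, $2$-connected subgraph on at least three vertices, it contains a shortest path of length $2$, say $u,m,w$, with $u,w$ non-adjacent in $G$ and $d_G(u,w)=2$. The engine of the proof is that the $4$-point condition forces $d_G$ to be realizable as a tree metric (Buneman): there is a weighted tree $T$ and an isometry $\phi\colon V(G)\to T$. Then $\phi(m)$ lies on the geodesic $[\phi(u),\phi(w)]$ at distance $1$ from each end (because $d_G(u,m)+d_G(m,w)=d_G(u,w)$), so $\mu:=\phi(m)$ is its midpoint and separates $\phi(u)$ from $\phi(w)$ in $T$. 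Since $B$ is $2$-connected, $B-m$ is still connected and joins $u$ to $w$, giving a $u$--$w$ path $u=r_0,r_1,\dots,r_s=w$ that avoids $m$. Its image is a walk from $\phi(u)$ to $\phi(w)$ whose consecutive steps have length $1$, and this walk must cross the separating point $\mu$. It cannot pass through $\mu$ at a vertex, for that would give some $r_j$ with $\phi(r_j)=\phi(m)$ and hence $r_j=m$; so $\mu$ lies strictly inside some edge $[\phi(r_j),\phi(r_{j+1})]$, whence $d_G(r_j,m)+d_G(r_{j+1},m)=d_G(r_j,r_{j+1})=1$ with both summands positive integers, which is impossible. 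This contradiction shows every block is complete.

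The main obstacle is the reverse direction, and specifically the step invoking tree-metric realizability. If citing Buneman's realization theorem is to be avoided and a self-contained argument is wanted, I would instead run the ``midpoint-crossing'' argument purely metrically: tracking the quantities $d_G(u,r_i)$, $d_G(w,r_i)$, $d_G(m,r_i)$ along the $m$-avoiding path and using the $4$-point condition on the quadruples $\{u,w,m,r_i\}$ and $\{r_j,r_{j+1},m,w\}$ to show that some consecutive pair $r_j,r_{j+1}$ must have $m$ metrically between them, i.e. $d_G(r_j,m)+d_G(m,r_{j+1})=d_G(r_j,r_{j+1})=1$ --- the same contradiction without an ambient tree. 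The remaining points are routine: checking that blocks are induced and $2$-connected so that the pair $u,w$ and the $m$-avoiding path genuinely exist, and verifying the easy degenerate cases of the $4$-point inequality when some of $x,y,z,t$ coincide.
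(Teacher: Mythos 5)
The paper contains no proof of Theorem~\ref{thm-four}: it is quoted from Howorka~\cite{How:79} and used as a black box, so there is no internal argument to compare yours against; judged on its own merits, your proof is correct. In effect you establish the sharper conceptual statement that a connected graph is a block graph if and only if $d_G$ is a tree metric. Your forward direction is sound: the vertex/block incidence graph is a tree (distinct blocks meet in at most one cut vertex, non-cut vertices become pendant leaves), and with all edge weights $\tfrac12$ each block on the alternating path from $\widehat{u}$ to $\widehat{v}$ contributes two half-edges, i.e.\ exactly the one edge needed to cross a complete block, so $d_{\mathcal{T}}(\widehat{u},\widehat{v})=d_G(u,v)$ and the four-point condition is inherited from the tree metric. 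Your converse is also sound: the displayed inequality is verbatim Buneman's four-point condition, so invoking tree-metric realizability is legitimate, and the separation argument closes cleanly --- $\phi(m)$ is the midpoint of the geodesic $[\phi(u),\phi(w)]$, the image of the $m$-avoiding path in the $2$-connected block must meet it, and $d_G(r_j,m)+d_G(m,r_{j+1})=d_G(r_j,r_{j+1})=1$ with both summands positive integers is absurd; the background facts you use (a non-complete block has at least three vertices and is $2$-connected, blocks are induced subgraphs, the weighted tree must be read as a real tree so the midpoint may sit inside an edge) are standard and you flag them. Two comparative remarks. Howorka's original treatment is elementary and works directly with the graph metric, so your route trades self-containedness for brevity by importing Buneman's theorem --- a reasonable trade, but worth citing explicitly. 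Your proposed Buneman-free fallback, by contrast, is only a sketch: extracting ``some consecutive pair has $m$ metrically between them'' purely from the four-point condition requires genuine case analysis (note that in $C_5$ a distance-$2$ pair inside a non-complete block has a \emph{unique} common neighbour, so the cheapest violating-quadruple tricks, such as playing two common neighbours of $u$ and $w$ against each other, do not cover all cases); since your main route is complete, this gap is harmless, but the fallback should not be presented as routine.
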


Now we are ready to prove Theorem~\ref{thm-block}.

\begin{proof}[Proof of Theorem~\ref{thm-block}]
Since $G$ is a connected block graph, from Theorem~\ref{thm-four} it follows that $d(x,y)+d(a,b)\leq\max\{d(x,a)+d(y,b),d(x,b)+d(y,a)\}$. Without loss of generality, we can assume that 
\begin{align}
d(x,y)+d(a,b)\leq d(x,a)+d(y,b).\label{ineq-1}
\end{align}
If $d(a,y)\geq d(y,b)$, then 
\begin{align*}
0&\geq d(a,x,y)-d(a,b,c)=d(a,x)+d(a,y)+d(x,y)-d(a,b)-d(a,c)-d(b,c)\\
&=(d(a,x)-d(a,b))+d(a,y)+d(x,y)-d(a,c)-d(b,c)\\
&\geq (d(x,y)-d(y,b))+d(a,y)+d(x,y)-d(a,c)-d(b,c)\\
&=(d(a,y)-d(y,b))+2d(x,y)-d(a,c)-d(b,c)\\
&\geq 2\diam(G)-d(a,c)-d(b,c)\geq 0.
\end{align*}
Hence, $d(a,x,y)=d(a,b,c)=\tr(G)$ and $d(a,c)=d(b,c)=\diam(G)$.

If $d(b,x)\geq d(x,a)$, then
\begin{align*}
0&\geq d(b,x,y)-d(a,b,c)=d(b,x)+d(b,y)+d(x,y)-d(a,b)-d(a,c)-d(b,c)\\
&(d(b,y)-d(a,b))+d(b,x)+d(x,y)-d(a,c)-d(b,c)\\
&\geq (d(x,y)-d(x,a))+d(b,x)+d(x,y)-d(a,c)-d(b,c)\\
&=(d(b,x)-d(x,a))+2d(x,y)-d(a,c)-d(b,c)\\
&\geq 2\diam(G)-d(a,c)-d(b,c)\geq 0.
\end{align*}
Thus, in this case also $d(b,x,y)=d(a,b,c)=\tr(G)$ and $d(a,c)=d(b,c)=\diam(G)$.

Now suppose $d(a,y)<d(y,b)$ and $d(b,x)<d(x,a)$. Then $d(a,y)+d(b,x)<d(y,b)+d(x,a)$ implying that $d(y,b)+d(x,a)\leq d(x,y)+d(a,b)$ (again, see Theorem~\ref{thm-four}). Combining this inequality with (\ref{ineq-1}), we obtain the equality
\begin{align}
d(x,y)+d(a,b)=d(x,a)+d(y,b).\label{eq-1}
\end{align}

In a similar fashion, we can consider two sums $d(x,y)+d(a,c)$, $d(x,y)+d(a,c)$ and apply Theorem~\ref{thm-four} to each of them. Hence, we restrict ourselves to the case where the following equalities hold:
\begin{align}
&d(x,y)+d(a,c)=d(a,x)+d(c,y) \ \text{or}\label{eq-2}\\ 
&d(x,y)+d(a,c)=d(a,y)+d(c,x)\label{eq-3}
\end{align}
and
\begin{align}
&d(x,y)+d(b,c)=d(b,x)+d(c,y) \ \text{or}\label{eq-4}\\ 
&d(x,y)+d(b,c)=d(b,y)+d(c,x).\label{eq-5}
\end{align}
If (\ref{eq-3}) holds, then using (\ref{eq-1}), we obtain
\begin{align*}
0&\geq d(a,x,y)-d(a,b,c)=d(a,x)+d(a,y)+d(x,y)-d(a,b)-d(a,c)-d(b,c)\\
&=(d(x,y)+d(a,b)-d(y,b))+(d(x,y)+d(a,c)-d(c,x))\\
&+d(x,y)-d(a,b)-d(a,c)-d(b,c)\\
&=3\diam(G)-d(y,b)-d(c,x)-d(b,c)\geq 0.
\end{align*}
Therefore, $d(a,x,y)=\tr(G)$ and $d(b,c)=\diam(G)$.
If (\ref{eq-4}) holds, then using (\ref{eq-1}), we can similarly obtain $d(b,x,y)=\tr(G)$ and $d(a,c)=\diam(G)$.

Finally, assume that (\ref{eq-2}) and (\ref{eq-5}) hold. Then
\begin{align*}
0&\geq d(c,x,y)-d(a,b,c)=d(c,x)+d(c,y)+d(x,y)-d(a,b)-d(a,c)-d(b,c)\\
&=(d(x,y)+d(b,c)-d(b,y))+(d(x,y)+d(a,c)-d(a,x))\\
&+d(x,y)-d(a,b)-d(a,c)-d(b,c)\\
&=3\diam(G)-d(b,y)-d(a,x)-d(a,b)\geq 0.
\end{align*}
Hence, in this case $d(c,x,y)=\tr(G)$ and $d(a,b)=\diam(G)$.
\end{proof}

One natural generalization of block graphs are \emph{distance-hereditary} graphs. These are connected graphs in which every induced connected subgraph is isometric~\cite{How:77}. It is well-known~\cite{BanMul:86} that a connected graph $G$ is distance hereditary if and only if for any four its vertices $a,b,c,d\in V(G)$ the two sums from $d_{G}(a,b)+d_{G}(c,d)$, $d_{G}(a,c)+d_{G}(b,d)$, $d_{G}(a,d)+d_{G}(b,c)$ are equal. Combining this characterization with Theorem~\ref{thm-four}, we obtain that every block graph is distance-hereditary. However, the statement of Theorem~\ref{thm-block} can not be extended to distance-hereditary graphs. To see this, consider the two graphs $G$ and $H$ on Figure~\ref{fig-1}. Indeed, $G$ and $H$ are both distance-hereditary, but the triametral triple $a,b,c$ in $G$ does not contain even a peripheral vertex (since $\diam(G)=d_{G}(x,y)=3$). Similarly, the peripheral vertex $x$ (and thus a diametral pair $x,y$) in $H$ can not be extended to a triametral triple (since $\tr(H)=d_{G}(a,b,c)=6$). 

\begin{figure}[htb]
	\centering
	\begin{tikzpicture}[auto ,node distance = 1 cm,on grid ,
	semithick , state/.style ={circle, top color =black , bottom color = black , 
		draw, minimum width=1mm}, inner sep=2.5pt]
	
	\node at (0,2) {\Large $G:$};
	\node at (9,2) {\Large $H:$};
	
	\begin{scope}[shift={(0,0)}]
	
	\node[state, label=above:$y$] (y) {};
	\node[state, label=above:   ] (m) [right  = 20mm of y] {};
	\node[state, label=right:$a$] (a) [above right = 12mm and 23mm of m] {};
	\node[state, label=right:$b$] (b) [below right = 12mm and 23mm of m] {};
	\node[state, label=left: $x$] (x) [below = 24mm of m] {};
	\node[state, label=right:$c$] (c) [below right = 12mm and 23mm of x] {};
	
	\path (y) edge  node {} (m);
	\path (m) edge  node {} (a);
	\path (m) edge  node {} (b);
	\path (m) edge  node {} (c);
	\path (x) edge  node {} (a);
	\path (x) edge  node {} (b);
	\path (x) edge  node {} (c);
	\path (y) edge  node {} (m);
	\path (y) edge  node {} (m);
	
	\end{scope}
	
	\begin{scope}[shift={(9cm,-0.3)}]
	\node[state, label=above:$a$] (a2) {};
	\node[state, label=above:$b$] (b2) [right = 25mm of a2] {};
	\node[state, label=above:$c$] (c2) [right = 25mm of b2] {};
	\node[state, label=below:$x$] (x2) [below = 25mm of a2] {};
	\node[state, label=left:    ] (m2) [right = 25mm of x2] {};
	\node[state, label=below:$y$] (y2) [right = 25mm of m2] {};
	
	\path (a2) edge  node {} (x2);
	\path (a2) edge  node {} (m2);
	\path (a2) edge  node {} (y2);
	\path (b2) edge  node {} (x2);
	\path (b2) edge  node {} (m2);
	\path (b2) edge  node {} (y2);
	\path (c2) edge  node {} (x2);
	\path (c2) edge  node {} (m2);
	\path (c2) edge  node {} (y2);
	\path (x2) edge  node {} (m2);
	\path (y2) edge  node {} (m2);
	
	\end{scope}
	
	\end{tikzpicture}
\caption{}\label{fig-1}
\end{figure}

\section{Open questions}

Consider the following weakening of Questions 3,4 schemata:

{\bf Question 3':} Is it true that any triametral triple of vertices in a ... graph contains a peripheral vertex?

{\bf Question 4':} Is it true that any peripheral vertex in a ... graph can be extended to a triametral triple?

These can be formulated for various classes of graphs substituting the corresponding class into the ellipsis. Similarly, in what follows, we will refer to Questions 3,4 also as to shemata of questions for graph classes. 

As can be seen from the graph on Figure~\ref{fig-2}, Question 3 does not hold for median graphs: $\tr(G)=d_{G}(a,b,c)=6$, but the triple $a,b,c$ does not contain diametral vertices as $\diam(G)=d_{G}(a,x)=3$. However, the triple $a,b,c$ contains a peripheral vertex $a$. Therefore, we formulate the next problem:

\begin{enumerate}
\item Does Question 3' hold for median graphs?
\end{enumerate}

\begin{figure}[htb]
	\centering
	\begin{tikzpicture}[auto ,node distance = 1 cm,on grid ,
	semithick , state/.style ={circle, top color =black , bottom color = black , 
		draw, minimum width=1mm}, inner sep=2.5pt]
	
	\node at (0,3) {\Large $G:$};
	\node[state, label=below:$a$]        (a) {};
	\node[state, label=above:   ]        (m) [right = 20mm of a] {};
	\node[state, label=below right: $c$] (c) [right = 20mm of m] {};
	\node[state, label=above:$b$]        (b) [above = 20mm of m] {};
	\node[state, label=above right:$x$]  (x) [above = 20mm of c] {};

	\path (a) edge  node {} (m);
	\path (m) edge  node {} (c);
	\path (m) edge  node {} (b);
	\path (b) edge  node {} (x);
	\path (x) edge  node {} (c);

	\end{tikzpicture}
	\caption{}\label{fig-2}
\end{figure}

Also, we do not know if in a median graph every triametral triple of vertices contains a diametral pair, thus formulating our second problem:

\begin{enumerate}
	\setcounter{enumi}{1}
\item Does Question 4 hold for median graphs?
\end{enumerate}

It is worth noting that Questions 3' and 4' do not hold for modular graphs (these are connected graphs $G$ in which $[u,v]_{G}\cap[u,w]_{G}\cap[v,w]_{G}\neq\emptyset$ for any triple of vertices $u,v,w\in V(G)$), which are the natural generalization of median graphs. Indeed, the graph $G$ on Figure~\ref{fig-1} is modular, however as it was already mentioned, the triametral triple $a,b,c$ does not contain a peripheral vertex. Also, the modular graph $K_{2,3}$ contains a peripheral vertex which does not belong to a triametral triple. 

Finally, Questions 3' and 4' also do not hold for distance-hereditary graphs (again, see the graphs on Figure~\ref{fig-1}). At the end of the paper we propose our final problem, the ``alternative'' for distance-hereditary graphs:

\begin{enumerate}
\setcounter{enumi}{2}
\item Is it true that for a distance-hereditary graph at least one of Questions 3' or 4 hold?
\end{enumerate}

\end{document}